\documentclass[12pt,leqno]{article}

\usepackage{amsmath,amssymb,amsthm}
\usepackage[colorlinks]{hyperref}
\usepackage{cite}

\hypersetup{linkcolor=blue,citecolor=blue,urlcolor=blue}

\newcommand{\R}{\mathbb{R}}
\newcommand{\diag}{\operatorname{diag}}

\title{A spectral product formula for repunits via a tridiagonal Toeplitz similarity}
\author{Johann Verwee\\\small Independent Researcher\\\small\texttt{mverwee@gmail.com}}
\date{}

\newtheorem{theorem}{Theorem}
\newtheorem{lemma}[theorem]{Lemma}
\newtheorem{proposition}[theorem]{Proposition}

\theoremstyle{definition}

\newtheorem{remark}[theorem]{Remark}

\begin{document}
\maketitle

\begin{abstract}
For $b>0$ and $n\geqslant 1$, we consider the $n\times n$ tridiagonal matrix $V_n(b)$ with diagonal entries $b+1$, superdiagonal entries $1$, and subdiagonal entries $b$. A diagonal similarity reduces $V_n(b)$ to a symmetric tridiagonal Toeplitz matrix and hence makes its spectrum explicit. Since $\det\left(V_n(b)\right)$ equals the geometric sum $1+b+\cdots+b^{n}$, taking determinants yields a finite cosine product evaluation for this quantity. As further consequences, we derive sharp bounds from the extremal eigenvalues, write down explicit eigenvectors with respect to a natural weighted inner product, and obtain a closed formula for $V_n(b)^{-1}$.
\end{abstract}

\textit{Keywords:} repunit; tridiagonal matrix; Toeplitz matrix; eigenvalues; cosine product; Chebyshev polynomials.

\textit{MSC 2020:} 11B83, 15A18, 33C45.

\section{Introduction}\label{sec:intro}
For a real parameter $b>0$ and an integer $m\geqslant 1$, the repunit in base $b$ is by definition
\[
R_m(b):=1+b+\cdots+b^{m-1}=\frac{b^m-1}{b-1}\qquad (b\ne 1).
\]
We adopt the continuous extension $R_m(1):=m$, so that $R_m(b)$ is defined for all $b>0$. When $b$ is an integer with $b\geqslant 2$, this coincides with the usual base-$b$ repunits; we use the term ``repunit'' by analogy for general $b>0$.

The aim of this note is to give a short spectral proof of a finite trigonometric product evaluation for $R_m(b)$. The motivation comes in part from the recent study of higher-order repunit-type sequences in \cite{PrasadKumariMohantaMahato2025}, where closely related tridiagonal determinants already appear. Our point here is that a simple diagonal similarity reduces the relevant matrix to a symmetric tridiagonal Toeplitz model, and this immediately makes the spectrum explicit.

Fix $b>0$ and an integer $n\geqslant 1$. Consider the tridiagonal matrix
\begin{equation}\label{eq:Vdef}
V_n(b)=
\begin{pmatrix}
b+1 & 1 & 0 & \cdots & 0\\
b & b+1 & 1 & \ddots & \vdots\\
0 & b & b+1 & \ddots & 0\\
\vdots & \ddots & \ddots & \ddots & 1\\
0 & \cdots & 0 & b & b+1
\end{pmatrix}.
\end{equation}
A standard continuant computation shows that $\det(V_n(b))$ coincides with $R_{n+1}(b)$; see Lemma~\ref{lem:detrepunit}.
The key observation is that $V_n(b)$ becomes symmetric after a diagonal change of basis (Section~\ref{sec:similarity}), so its eigenvalues are explicit and multiplying them yields a clean cosine product for $R_{n+1}(b)$ (Section~\ref{sec:spectrum}). We then record quick consequences and explicit eigenvectors (Section~\ref{sec:consequences}) and end with a Chebyshev identity leading to a closed inverse formula (Section~\ref{sec:inverse}).

\section{Diagonal similarity and a weighted inner product}\label{sec:similarity}
The matrix $V_n(b)$ is generally non-symmetric when $b\ne 1$, but its super- and subdiagonal entries differ only by the constant factor $b$. A diagonal change of basis balances these off-diagonal weights and produces a symmetric tridiagonal Toeplitz model. This not only forces the spectrum to be real but also exhibits the natural weighted inner product for which $V_n(b)$ is self-adjoint.

For $b>0$, let $D=\diag\left(b^{(i-1)/2}\right)_{1\leqslant i\leqslant n}$, and set
\begin{equation}\label{eq:Tdef}
T_n(b)=
{\setlength{\arraycolsep}{4pt}\renewcommand{\arraystretch}{1.08}
\begin{pmatrix}
b+1 & \sqrt{b} & 0 & \cdots & 0\\
\sqrt{b} & b+1 & \sqrt{b} & \ddots & \vdots\\
0 & \sqrt{b} & b+1 & \ddots & 0\\
\vdots & \ddots & \ddots & \ddots & \sqrt{b}\\
0 & \cdots & 0 & \sqrt{b} & b+1
\end{pmatrix}}.
\end{equation}

\begin{lemma}[Diagonal similarity and weighted symmetry]\label{lem:similarity}
Let $b>0$ and $n\geqslant 1$.
\begin{enumerate}
\item[(a)] One has $D^{-1}V_n(b)D=T_n(b)$. In particular, $V_n(b)$ and $T_n(b)$ have the same spectrum.
\item[(b)] Let $W=D^{-2}$. Then $V_n(b)^T W= W V_n(b)$, i.e. $V_n(b)$ is self-adjoint for
$\langle x,y\rangle_W=x^T Wy$ on $\R^n$.
\end{enumerate}
\end{lemma}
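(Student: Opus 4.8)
The plan is to verify part (a) by a direct entry-by-entry computation of the conjugation $D^{-1}V_n(b)D$, and then to obtain part (b) essentially for free from (a).

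For (a): since $D=\diag(d_i)$ with $d_i=b^{(i-1)/2}$ is diagonal and invertible (here $b>0$ is what makes $D^{-1}$ well defined), conjugation acts on the $(i,j)$ entry of an arbitrary matrix $A$ by $(D^{-1}AD)_{ij}=d_i^{-1}a_{ij}d_j=b^{(j-i)/2}a_{ij}$. I would then read off the three nonzero bands of $V_n(b)$ one at a time: the main diagonal ($j=i$) is fixed at $b+1$; the superdiagonal ($j=i+1$) gets multiplied by $b^{1/2}$, sending $1\mapsto\sqrt b$; the subdiagonal ($j=i-1$) gets multiplied by $b^{-1/2}$, sending $b\mapsto\sqrt b$; all other entries are $0$ and stay $0$. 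The result matches $T_n(b)$ entry for entry. Equality of spectra is then immediate, since similar matrices share the same characteristic polynomial (and, as a bonus, $T_n(b)$ is symmetric, so the common spectrum is real).

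For (b): with $W=D^{-2}$, part (a) gives $V_n(b)=DT_n(b)D^{-1}$, hence $V_n(b)^T=D^{-1}T_n(b)^TD=D^{-1}T_n(b)D$ because $T_n(b)$ is symmetric. Multiplying on the right by $W=D^{-2}$ yields $V_n(b)^TW=D^{-1}T_n(b)D^{-1}$, while on the other side $WV_n(b)=D^{-2}\cdot DT_n(b)D^{-1}=D^{-1}T_n(b)D^{-1}$, so the two agree. Equivalently, since $W$ is diagonal the asserted identity $V_n(b)^TW=WV_n(b)$ is just the pointwise relation $b^{\,i-1}\,[V_n(b)]_{ji}=b^{\,j-1}\,[V_n(b)]_{ij}$, which one checks band by band exactly as in (a). The self-adjointness claim is then formal: for $x,y\in\R^n$, $\langle V_n(b)x,y\rangle_W=x^TV_n(b)^TWy=x^TWV_n(b)y=\langle x,V_n(b)y\rangle_W$; and $W=\diag\!\big(b^{-(i-1)}\big)$ is positive definite since $b>0$, so $\langle\cdot,\cdot\rangle_W$ is a genuine inner product.

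I do not expect a real obstacle here: the whole content is a single diagonal conjugation. The only things to be careful about are the bookkeeping with the half-integer exponents $\tfrac{i-1}{2}$, the orientation of the off-diagonal bands (keeping straight that $1$ sits above and $b$ below), and recording that the hypothesis $b>0$ is exactly what guarantees $D$, $D^{-1}$ and $W$ are well defined and $W$ positive definite.
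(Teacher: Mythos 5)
Your proof is correct and follows essentially the same route as the paper: an entry-by-entry computation of the diagonal conjugation for (a), and then the algebraic manipulation $V_n(b)^TW=D^{-1}T_n(b)D^{-1}=WV_n(b)$ using (a) and the symmetry of $T_n(b)$ for (b). The extra remarks (the equivalent pointwise relation and the positive definiteness of $W$) are fine but not needed beyond what the paper records.
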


\begin{proof}
(a) Since $D$ is diagonal, the diagonal entries are unchanged; for each $i$ one has $(D^{-1}V_n(b)D)_{i,i}=b+1$.
For the off-diagonal entries we compute
\[
(D^{-1}V_n(b)D)_{i,i+1}=d_i^{-1}d_{i+1}=\sqrt{b},
\qquad
(D^{-1}V_n(b)D)_{i+1,i}=d_{i+1}^{-1}\,b\,d_i=\sqrt{b},
\]
and all other entries are $0$, so $D^{-1}V_n(b)D=T_n(b)$.

(b) Since $V_n(b)=D\,T_n(b)\,D^{-1}$ by (a) and $T_n(b)^T=T_n(b)$, we have $V_n(b)^T=D^{-1}T_n(b)D$.
With $W=D^{-2}$, this gives
\[
\begin{aligned}
V_n(b)^T W
&=(D^{-1}T_n(b)D)D^{-2}
= D^{-1}T_n(b)D^{-1}\\
&=D^{-2}(DT_n(b)D^{-1})
= W V_n(b).
\end{aligned}
\]
This proves (b).
\end{proof}

\section{Spectrum and spectral product factorization}\label{sec:spectrum}
By Lemma~\ref{lem:similarity} we may replace $V_n(b)$ by the symmetric Toeplitz matrix $T_n(b)$. Its eigenvalues are classical and can be written in terms of discrete sine waves; we recall the resulting formula in Proposition~\ref{prop:eigs}. On the other hand, $\det(V_n(b))$ satisfies a short continuant recurrence and equals the geometric sum $R_{n+1}(b)$ (Lemma~\ref{lem:detrepunit}). Taking the product of eigenvalues then yields a finite cosine product evaluation for $R_{n+1}(b)$ (Theorem~\ref{thm:product}).

\begin{proposition}[Eigenvalues]\label{prop:eigs}
For $b>0$ and $n\geqslant 1$, the eigenvalues of $V_n(b)$ are
\begin{equation}\label{eq:eigs}
\lambda_k(b)=b+1+2\sqrt{b}\cos\left(\frac{k\pi}{n+1}\right),
\qquad 1\leqslant k\leqslant n.
\end{equation}
In particular, $\lambda_k(b)>0$ for every $1\leqslant k\leqslant n$.
\end{proposition}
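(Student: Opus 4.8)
The plan is to reduce, via Lemma~\ref{lem:similarity}(a), to computing the eigenvalues of the symmetric tridiagonal Toeplitz matrix $T_n(b)$, which has constant diagonal $b+1$ and constant off-diagonals $\sqrt{b}$. Write $T_n(b) = (b+1)I + \sqrt{b}\,M_n$, where $M_n$ is the $n\times n$ matrix with $0$ on the diagonal and $1$ on the two adjacent diagonals. Since shifting and scaling a matrix only shifts and scales its eigenvalues, it suffices to show that the eigenvalues of $M_n$ are $2\cos\!\left(\frac{k\pi}{n+1}\right)$ for $1\le k\le n$.

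For this classical fact I would exhibit explicit eigenvectors. Fix $k\in\{1,\dots,n\}$ and set $v^{(k)} = \left(\sin\frac{k\pi}{n+1},\ \sin\frac{2k\pi}{n+1},\ \dots,\ \sin\frac{nk\pi}{n+1}\right)^T$, i.e. $v^{(k)}_j = \sin\!\left(\frac{jk\pi}{n+1}\right)$. Computing $(M_n v^{(k)})_j = v^{(k)}_{j-1} + v^{(k)}_{j+1}$ for $2\le j\le n-1$ and applying the product-to-sum identity $\sin((j-1)\theta) + \sin((j+1)\theta) = 2\cos\theta\,\sin(j\theta)$ with $\theta = \frac{k\pi}{n+1}$ gives exactly $2\cos\theta\, v^{(k)}_j$. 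The two boundary rows $j=1$ and $j=n$ work out as well once one notes that the ``missing'' phantom entries would be $v^{(k)}_0 = \sin 0 = 0$ and $v^{(k)}_{n+1} = \sin(k\pi) = 0$, so the same three-term recurrence holds formally there too. Hence $M_n v^{(k)} = 2\cos\!\left(\frac{k\pi}{n+1}\right) v^{(k)}$, and $v^{(k)}\neq 0$ since its first coordinate $\sin\frac{k\pi}{n+1}$ is nonzero for $1\le k\le n$. This produces $n$ eigenvalues; they are pairwise distinct because $\cos$ is strictly decreasing on $(0,\pi)$ and the arguments $\frac{k\pi}{n+1}$ are distinct points there, so we have found the full spectrum. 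Translating back, $\lambda_k(b) = b+1+2\sqrt{b}\cos\!\left(\frac{k\pi}{n+1}\right)$, which is \eqref{eq:eigs}.

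For positivity, since $b>0$ we have $b+1 \ge 2\sqrt{b}$ by AM--GM, with equality only at $b=1$; and $\cos\!\left(\frac{k\pi}{n+1}\right) > -1$ strictly because $\frac{k\pi}{n+1} < \pi$ for $k\le n$. Therefore $\lambda_k(b) = (b+1) + 2\sqrt{b}\cos\!\left(\frac{k\pi}{n+1}\right) > (b+1) - 2\sqrt{b} = (\sqrt{b}-1)^2 \ge 0$, and in fact $\lambda_k(b)>0$ for every $k$: when $b\neq 1$ the first inequality $(b+1)>2\sqrt b$ already suffices, and when $b=1$ one has $\lambda_k(1) = 2 + 2\cos\!\left(\frac{k\pi}{n+1}\right) > 0$ since the cosine exceeds $-1$. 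This gives the final claim.

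I do not anticipate a genuine obstacle here: the result is the standard diagonalization of the path-graph adjacency matrix, and the only mild care needed is the verification of the two boundary rows of the eigenvector equation, which the ``phantom zero entry'' bookkeeping handles cleanly. The argument is essentially a two-line trigonometric identity plus a distinctness count.
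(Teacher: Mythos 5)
Your proof is correct, and it follows the paper's route: reduce via Lemma~\ref{lem:similarity}(a) to the symmetric Toeplitz matrix $T_n(b)$, read off its spectrum, and then get positivity from $\lambda_k(b)>b+1-2\sqrt{b}=(\sqrt{b}-1)^2\geqslant 0$. The only difference is that where the paper simply cites the classical eigenvalue formula for $T_n(b)$, you prove it from scratch by writing $T_n(b)=(b+1)I+\sqrt{b}\,M_n$ and exhibiting the sine eigenvectors of the path-adjacency matrix $M_n$, with the boundary rows handled by the vanishing ``phantom'' entries $v^{(k)}_0$ and $v^{(k)}_{n+1}$ and with distinctness of the $n$ values $2\cos(k\pi/(n+1))$ guaranteeing you have the whole spectrum. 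This makes the proposition self-contained (and in fact anticipates the sine eigenvectors that the paper only introduces later, in Proposition~\ref{prop:evecs}), at the cost of a slightly longer argument; your extra case split $b=1$ versus $b\neq 1$ in the positivity step is harmless but unnecessary, since the strict inequality $\lambda_k(b)>(\sqrt{b}-1)^2\geqslant 0$ already gives $\lambda_k(b)>0$ for all $b>0$.
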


\begin{proof}
By Lemma~\ref{lem:similarity}, $V_n(b)$ is similar to the symmetric Toeplitz tridiagonal matrix $T_n(b)$. The eigenvalue formula \eqref{eq:eigs} for $T_n(b)$ is classical; see \cite{SmithPDE}. Since $k\pi/(n+1)\in(0,\pi)$ we have $\cos(k\pi/(n+1))>-1$, and therefore
\[
\lambda_k(b)=b+1+2\sqrt{b}\cos\left(\frac{k\pi}{n+1}\right)>b+1-2\sqrt{b}=(\sqrt{b}-1)^2\geqslant 0,
\]
so $\lambda_k(b)>0$.
\end{proof}

We now connect the spectral data to repunits by evaluating $\det(V_n(b))$ via an elementary continuant recursion.

\begin{lemma}[Determinant]\label{lem:detrepunit}
For $b\ne 1$ and $n\geqslant 1$, one has $\det(V_n(b))=R_{n+1}(b)$. For $b=1$, one has $\det(V_n(1))=n+1$.
\end{lemma}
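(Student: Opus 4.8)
The plan is to run the standard cofactor expansion for tridiagonal determinants and then solve the resulting linear recurrence. Write $D_n:=\det(V_n(b))$, with the convention $D_0=1$ for the empty determinant. Expanding $\det(V_n(b))$ along its last row (equivalently, its last column) expresses $D_n$ in terms of $D_{n-1}$ and $D_{n-2}$: the diagonal entry $b+1$ contributes $(b+1)D_{n-1}$, while the subdiagonal/superdiagonal pair in the corner contributes $-b\cdot 1\cdot D_{n-2}$. Hence
\[
D_n=(b+1)D_{n-1}-b\,D_{n-2},\qquad n\ge 2,
\]
together with $D_0=1$ and $D_1=b+1$.

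Next I would solve this recurrence. Its characteristic equation $x^2-(b+1)x+b=0$ factors as $(x-1)(x-b)=0$, with roots $1$ and $b$. For $b\ne 1$ these are distinct, so $D_n=A+Bb^n$; matching the two initial values gives $A+B=1$ and $A+Bb=b+1$, whence $B=b/(b-1)$ and $A=-1/(b-1)$, and therefore
\[
D_n=\frac{b^{n+1}-1}{b-1}=R_{n+1}(b).
\]
Equivalently, and perhaps cleaner to present, one checks directly that the closed form $R_{n+1}(b)$ satisfies the very same recurrence and the same two initial conditions, so equality follows by an immediate induction on $n$; this avoids the partial-fraction bookkeeping entirely.

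Finally, the degenerate case $b=1$: the recurrence becomes $D_n=2D_{n-1}-D_{n-2}$ with a double characteristic root $1$, so $D_n=A+Bn$; from $D_0=1$ and $D_1=2$ we obtain $A=B=1$, i.e. $D_n=n+1$, which matches the continuous extension $R_{n+1}(1)=n+1$. There is no genuine obstacle in this lemma; the only points needing a little care are normalizing $D_0=1$ so that the recurrence is already valid at $n=2$, and disposing of $b=1$ separately (alternatively, one may note that $\det(V_n(b))$ and $R_{n+1}(b)$ are both polynomials in $b$ agreeing for all $b\ne 1$, so they agree at $b=1$ by continuity).
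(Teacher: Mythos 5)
Your proposal is correct and follows essentially the same route as the paper: the cofactor expansion along the last row yielding $D_n=(b+1)D_{n-1}-b\,D_{n-2}$ with $D_0=1$, $D_1=b+1$, and then (as in your ``cleaner'' variant) the observation that $R_{n+1}(b)$ satisfies the same recurrence and initial conditions, with the $b=1$ case handled separately via the double root. The explicit characteristic-equation solution and the continuity-in-$b$ remark are harmless extras, not a different method.
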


\begin{proof}
Let $\Delta_n(b)=\det(V_n(b))$. Expanding along the last row gives the continuant recurrence
\[
\Delta_n(b)=(b+1)\Delta_{n-1}(b)-b\,\Delta_{n-2}(b)\qquad (n\geqslant 2),
\]
where the minus sign comes from the cofactor of the $(n,n-1)$ entry since $(-1)^{n+(n-1)}=-1$. With $\Delta_0(b)=1$ and $\Delta_1(b)=b+1$, the same recurrence and initial values hold for $R_{n+1}(b)$ when $b\neq 1$, hence $\Delta_n(b)=R_{n+1}(b)$ for $b\neq 1$.

When $b=1$, the recurrence becomes $\Delta_n(1)=2\Delta_{n-1}(1)-\Delta_{n-2}(1)$ with $\Delta_0(1)=1$, $\Delta_1(1)=2$, whose solution is $\Delta_n(1)=n+1$.
\end{proof}

We can now combine the determinant identity with the explicit eigenvalues. Since the determinant is the product of eigenvalues (counted with multiplicity), the repunit admits a closed finite cosine product.

\begin{theorem}[Spectral product formula]\label{thm:product}
For $b>0$ and $n\geqslant 1$,
\begin{equation}\label{eq:product}
R_{n+1}(b)=\prod_{k=1}^{n}\left(b+1+2\sqrt{b}\cos\left(\frac{k\pi}{n+1}\right)\right).
\end{equation}
\end{theorem}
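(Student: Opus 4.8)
The plan is to assemble the theorem directly from the two results already in hand: Proposition~\ref{prop:eigs}, which lists the eigenvalues $\lambda_k(b)=b+1+2\sqrt{b}\cos(k\pi/(n+1))$ of $V_n(b)$ for $1\le k\le n$, and Lemma~\ref{lem:detrepunit}, which identifies $\det(V_n(b))$ with $R_{n+1}(b)$. Since $V_n(b)$ is an $n\times n$ matrix whose characteristic polynomial factors over $\R$ (it is similar to the real symmetric matrix $T_n(b)$ by Lemma~\ref{lem:similarity}), its determinant equals the product of its $n$ eigenvalues counted with multiplicity, $\det(V_n(b))=\prod_{k=1}^n\lambda_k(b)$. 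Chaining these three facts gives exactly \eqref{eq:product}.

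Concretely, I would first invoke Lemma~\ref{lem:similarity}(a) to note that $V_n(b)$ and $T_n(b)$ share the same characteristic polynomial, so the $\lambda_k(b)$ of \eqref{eq:eigs} are precisely the roots of $\det(xI-V_n(b))$ with the correct multiplicities; hence $\det(xI-V_n(b))=\prod_{k=1}^n(x-\lambda_k(b))$. Setting $x=0$ and multiplying by $(-1)^n$ yields $\det(V_n(b))=\prod_{k=1}^n\lambda_k(b)$. Then I would substitute $\det(V_n(b))=R_{n+1}(b)$ from Lemma~\ref{lem:detrepunit} on the left and the explicit expression for $\lambda_k(b)$ on the right to obtain \eqref{eq:product}. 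One should briefly remark that the $b=1$ case is consistent: the right-hand side becomes $\prod_{k=1}^n\bigl(2+2\cos(k\pi/(n+1))\bigr)=\prod_{k=1}^n 4\cos^2(k\pi/(2(n+1)))$, which equals $n+1=R_{n+1}(1)$, matching Lemma~\ref{lem:detrepunit}; alternatively, continuity in $b$ of both sides handles $b=1$ once the identity is known for $b\ne 1$.

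Honestly, there is no serious obstacle here: the theorem is the immediate payoff of the preceding development, and the only thing to be careful about is the bookkeeping of multiplicities — i.e.\ making sure that the list $\lambda_1(b),\dots,\lambda_n(b)$ really is the full multiset of eigenvalues, which is guaranteed because $T_n(b)$ is an $n\times n$ symmetric matrix and \eqref{eq:eigs} is its classical complete eigenvalue list (no eigenvalue is missed or overcounted, even when two cosines coincide, since the formula is indexed by $k$ and the characteristic polynomial has degree $n$). With that point noted, the proof is a two-line substitution.
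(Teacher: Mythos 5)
Your proposal is correct and follows essentially the same route as the paper: combine the eigenvalue list from Proposition~\ref{prop:eigs} (via the similarity of Lemma~\ref{lem:similarity}) with $\det(V_n(b))=\prod_{k=1}^n\lambda_k(b)$ and the determinant identity of Lemma~\ref{lem:detrepunit}. Your extra remarks on multiplicities and the $b=1$ case are fine but not needed, since the paper's conventions ($R_{n+1}(1)=n+1$) make the argument uniform in $b>0$.
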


\begin{proof}
By Lemma~\ref{lem:similarity} and Proposition~\ref{prop:eigs},
$\det(V_n(b))=\prod_{k=1}^{n}\lambda_k(b)$.
Conclude using Lemma~\ref{lem:detrepunit}.
\end{proof}

As a simple illustration, one can reparametrize the base by $b=e^{2x}$ and rewrite the factors in hyperbolic form.

\begin{remark}[A hyperbolic cosine product]\label{rem:hyperbolic}
Let $x\in\R$ and put $b=e^{2x}$. Then \eqref{eq:product} becomes for $x\neq 0$
\[
\prod_{k=1}^{n}\left(\cosh x+\cos\left(\frac{k\pi}{n+1}\right)\right)
=
\frac{\sinh\left((n+1)x\right)}{2^{\,n}\,\sinh x}.
\]
By continuity at $x=0$ one deduces that
\[
\prod_{k=1}^{n}\left(1+\cos\left(\frac{k\pi}{n+1}\right)\right)=\frac{n+1}{2^{\,n}}.
\]
\end{remark}

\section{Consequences and explicit eigenvectors}\label{sec:consequences}

The product formula admits several immediate consequences. From the explicit expression \eqref{eq:eigs} and the fact that $k\pi/(n+1)\in(0,\pi)$ for $1\leqslant k\leqslant n$ (hence $-1<\cos(k\pi/(n+1))<1$), we obtain for each $1\leqslant k\leqslant n$ the strict two-sided estimate
\[
(\sqrt{b}-1)^2<b+1+2\sqrt{b}\cos\left(\frac{k\pi}{n+1}\right)<(\sqrt{b}+1)^2.
\]
Multiplying over $k$ and using $\det(V_n(b))=\prod_{k=1}^{n}\lambda_k(b)$ yields the bounds
\begin{equation}\label{eq:det-bounds}
\det(V_n(b))<(\sqrt{b}+1)^{2n}\qquad (b>0),
\end{equation}
and, if $b\neq 1$, also
\begin{equation}\label{eq:det-bounds-lower}
(\sqrt{b}-1)^{2n}<\det(V_n(b)),
\end{equation}
while $\det(V_n(1))=n+1$ by Lemma~\ref{lem:detrepunit}. We then write down explicit eigenvectors (and the resulting weighted orthogonality), which makes the diagonalization completely explicit. Transporting the sine basis through the conjugation in Lemma~\ref{lem:similarity} yields a convenient closed form, and the weighted self-adjointness in the same lemma forces orthogonality in the corresponding inner product.

The similarity in Lemma~\ref{lem:similarity} also yields closed eigenvectors for $V_n(b)$.

\begin{proposition}[Eigenvectors and weighted orthogonality]\label{prop:evecs}
For $1\leqslant k\leqslant n$, define $v^{(k)}\in\R^n$ by
\[
v^{(k)}_j = b^{(j-1)/2}\sin\left(\frac{jk\pi}{n+1}\right),\qquad 1\leqslant j\leqslant n.
\]
Then $V_n(b)\,v^{(k)}=\lambda_k(b)\,v^{(k)}$, where $\lambda_k(b)$ is given by \eqref{eq:eigs}.
Moreover, for $k\ne \ell$ one has $\langle v^{(k)},v^{(\ell)}\rangle_W=0$, where
$\langle\cdot,\cdot\rangle_W$ is the weighted inner product of Lemma~\ref{lem:similarity}.
\end{proposition}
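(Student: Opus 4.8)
The plan is to obtain both assertions by transporting the classical sine eigenbasis of the symmetric Toeplitz model $T_n(b)$ through the diagonal conjugation of Lemma~\ref{lem:similarity}. Recall that the symmetric tridiagonal Toeplitz matrix with diagonal entries $b+1$ and off-diagonal entries $\sqrt b$ has eigenvectors $u^{(k)}\in\R^n$ given by $u^{(k)}_j=\sin\!\big(jk\pi/(n+1)\big)$, with eigenvalue $\lambda_k(b)$ as in \eqref{eq:eigs} (this is the eigenvector half of the classical fact already cited for Proposition~\ref{prop:eigs}). Since $v^{(k)}=Du^{(k)}$ with $D=\diag\!\big(b^{(i-1)/2}\big)$, and $V_n(b)=DT_n(b)D^{-1}$ by Lemma~\ref{lem:similarity}(a), one gets $V_n(b)v^{(k)}=D\,T_n(b)\,D^{-1}Du^{(k)}=D\,T_n(b)\,u^{(k)}=\lambda_k(b)\,Du^{(k)}=\lambda_k(b)\,v^{(k)}$, which is the first claim.

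To keep the note self-contained I would also record the direct verification of $V_n(b)v^{(k)}=\lambda_k(b)v^{(k)}$. Writing $\theta=k\pi/(n+1)$ and extending by $v^{(k)}_0=v^{(k)}_{n+1}=0$ (consistent with $\sin 0=0$ and $\sin((n+1)\theta)=\sin(k\pi)=0$), the $j$-th coordinate of $V_n(b)v^{(k)}$ equals $b\,v^{(k)}_{j-1}+(b+1)v^{(k)}_j+v^{(k)}_{j+1}$. Factoring out $b^{(j-1)/2}$ turns the subdiagonal and superdiagonal terms into $\sqrt b\,\sin((j-1)\theta)$ and $\sqrt b\,\sin((j+1)\theta)$, and the identity $\sin((j-1)\theta)+\sin((j+1)\theta)=2\cos\theta\,\sin(j\theta)$ collapses the bracket to $\big(b+1+2\sqrt b\cos\theta\big)\sin(j\theta)=\lambda_k(b)\,v^{(k)}_j$. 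The only point needing care is that the boundary coordinates $j=1$ and $j=n$ are correctly produced by the zero-padding convention.

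For the orthogonality statement I would argue in either of two ways. The quick route uses Lemma~\ref{lem:similarity}(b): $V_n(b)$ is self-adjoint for $\langle\cdot,\cdot\rangle_W$, and the eigenvalues $\lambda_k(b)$ are pairwise distinct because $t\mapsto\cos t$ is injective on $(0,\pi)$ and the points $k\pi/(n+1)$, $1\le k\le n$, are distinct there; hence eigenvectors attached to distinct eigenvalues are automatically $W$-orthogonal. The more explicit route simply computes $\langle v^{(k)},v^{(\ell)}\rangle_W=(Du^{(k)})^T D^{-2}(Du^{(\ell)})=(u^{(k)})^T u^{(\ell)}=\sum_{j=1}^n\sin\!\big(jk\pi/(n+1)\big)\sin\!\big(j\ell\pi/(n+1)\big)$, which vanishes for $k\ne\ell$ by the standard discrete sine orthogonality relation.

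I do not expect a genuine obstacle: the mathematical content is the classical eigendecomposition of a symmetric tridiagonal Toeplitz matrix together with the bookkeeping of conjugation by $D$. The only mild points of attention are the boundary indices $j\in\{1,n\}$ in the direct verification, and explicitly noting the distinctness of the $\lambda_k(b)$ (or, equivalently, invoking the discrete sine orthogonality) to license the $W$-orthogonality conclusion.
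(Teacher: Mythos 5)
Your proposal is correct and follows essentially the same route as the paper: transport the sine eigenvectors $u^{(k)}$ of $T_n(b)$ through $v^{(k)}=Du^{(k)}$ for the eigenvector claim, and deduce $W$-orthogonality from the weighted self-adjointness of Lemma~\ref{lem:similarity}(b) together with the distinctness of the $\lambda_k(b)$. Your extra offerings (the direct coordinatewise verification with zero-padding, and the explicit reduction $\langle v^{(k)},v^{(\ell)}\rangle_W=(u^{(k)})^Tu^{(\ell)}$ via discrete sine orthogonality) are sound but not needed for the paper's argument.
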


\begin{proof}
Let $u^{(k)}\in\R^n$ be the sine vector $u^{(k)}_j=\sin(jk\pi/(n+1))$.
Then $T_n(b)\,u^{(k)}=\lambda_k(b)\,u^{(k)}$, and $v^{(k)}=D\,u^{(k)}$, so
$V_n(b)\,v^{(k)}=D\,T_n(b)\,u^{(k)}=\lambda_k(b)\,v^{(k)}$.

For orthogonality, let $k\ne \ell$. By Proposition~\ref{prop:eigs} the eigenvalues $\lambda_k(b)$ are pairwise distinct. Since $V_n(b)$ is self-adjoint for $\langle\cdot,\cdot\rangle_W$ (Lemma~\ref{lem:similarity}), we have
\[
\begin{aligned}
\lambda_k(b)\langle v^{(k)},v^{(\ell)}\rangle_W
&=\langle V_n(b)v^{(k)},v^{(\ell)}\rangle_W\\
&=\langle v^{(k)},V_n(b)v^{(\ell)}\rangle_W\\
&=\lambda_\ell(b)\langle v^{(k)},v^{(\ell)}\rangle_W.
\end{aligned}
\]
Subtracting gives $(\lambda_k(b)-\lambda_\ell(b))\langle v^{(k)},v^{(\ell)}\rangle_W=0$, and since $\lambda_k(b)\ne \lambda_\ell(b)$ we conclude
$\langle v^{(k)},v^{(\ell)}\rangle_W=0$.
\end{proof}

\section{A Chebyshev identity and an explicit inverse}\label{sec:inverse}
Although the conjugation in Lemma~\ref{lem:similarity} involves $\sqrt{b}$, all quantities of arithmetic interest (determinants, minors, inverse entries) are rational functions of $b$. To make this transparent, we first relate the relevant Chebyshev values to repunits, and then rewrite the standard inverse formula for symmetric tridiagonal Toeplitz matrices in these terms.

We use the Chebyshev polynomials $U_m$ of the second kind, characterized by $U_0(x)=1$, $U_1(x)=2x$, and $U_{m+1}(x)=2x\,U_m(x)-U_{m-1}(x)$.

\begin{lemma}\label{lem:cheb}
Let $b>0$ and $x_b=(b+1)/(2\sqrt{b})$. Then for every $m\geqslant 0$,
\[
U_m(x_b)=b^{-m/2}\,R_{m+1}(b).
\]
\end{lemma}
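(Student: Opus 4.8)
The plan is to prove the identity $U_m(x_b) = b^{-m/2} R_{m+1}(b)$ by showing that both sides satisfy the same second-order linear recurrence in $m$ with the same two initial values, since a second-order recurrence together with two consecutive values determines a sequence uniquely. Set $f_m := U_m(x_b)$ and $g_m := b^{-m/2} R_{m+1}(b)$; it suffices to check $f_0 = g_0$, $f_1 = g_1$, and that both obey $h_{m+1} = \frac{b+1}{\sqrt b}\, h_m - h_{m-1}$ for $m \ge 1$.

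First I would handle the base cases. For the left side, $U_0(x_b) = 1$ and $U_1(x_b) = 2x_b = (b+1)/\sqrt b$ directly from the defining relations $U_0(x)=1$, $U_1(x)=2x$. For the right side, $g_0 = b^{0} R_1(b) = 1$ since $R_1(b) = 1$, and $g_1 = b^{-1/2} R_2(b) = b^{-1/2}(1+b) = (b+1)/\sqrt b$. So $f_0 = g_0$ and $f_1 = g_1$. Next I would verify the recurrences. For $f_m$, the Chebyshev recurrence $U_{m+1}(x) = 2x U_m(x) - U_{m-1}(x)$ evaluated at $x = x_b$ gives exactly $f_{m+1} = \frac{b+1}{\sqrt b} f_m - f_{m-1}$. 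For $g_m$, I would start from the elementary repunit recurrence $R_{m+2}(b) = (b+1) R_{m+1}(b) - b\, R_m(b)$ (valid for $b \ne 1$, and by continuity for $b=1$; alternatively it follows from the closed form $R_m(b) = (b^m-1)/(b-1)$ by a one-line check, or from Lemma~\ref{lem:detrepunit} since $R_{m+1}(b) = \det V_m(b)$ satisfies that continuant recursion). Multiplying through by $b^{-(m+1)/2}$ converts this into $b^{-(m+1)/2} R_{m+2}(b) = \frac{b+1}{\sqrt b}\, b^{-m/2} R_{m+1}(b) - b^{-(m-1)/2} R_m(b)$, i.e. $g_{m+1} = \frac{b+1}{\sqrt b} g_m - g_{m-1}$. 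Since $f$ and $g$ satisfy the same recurrence with matching initial data, induction gives $f_m = g_m$ for all $m \ge 0$.

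The only mild subtlety — hardly an obstacle — is bookkeeping the powers of $b$ so that the factor $b$ in the repunit recurrence combines correctly with the shift in the exponent $-(m\pm1)/2$ to reproduce the constant coefficient $1$ in the Chebyshev recurrence; writing the middle coefficient as $\frac{b+1}{\sqrt b} = (b+1) \cdot b^{-1/2}$ and the last coefficient as $b \cdot b^{-(m+1)/2} = b^{-(m-1)/2}$ makes the matching transparent. One should also note that $b^{-m/2}$ and $\sqrt b$ are well-defined positive reals since $b > 0$, so no branch issues arise. With these observations the proof is a short induction.
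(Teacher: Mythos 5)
Your proof is correct: the base cases $f_0=g_0=1$, $f_1=g_1=(b+1)/\sqrt b$ check out, the repunit recurrence $R_{m+2}(b)=(b+1)R_{m+1}(b)-b\,R_m(b)$ is a polynomial identity (so it needs no continuity caveat at $b=1$), and the rescaling by $b^{-(m+1)/2}$ does turn it into the Chebyshev recurrence $h_{m+1}=\frac{b+1}{\sqrt b}h_m-h_{m-1}$, so the induction closes. This is, however, a genuinely different route from the paper: there the substitution $z=\sqrt b$, $x_b=(z+z^{-1})/2$ is fed into the classical closed-form evaluation $U_m\bigl((z+z^{-1})/2\bigr)=\bigl(z^{m+1}-z^{-(m+1)}\bigr)/\bigl(z-z^{-1}\bigr)$, which gives the identity in one line for $b\neq 1$, with the case $b=1$ handled separately via $U_m(1)=m+1$. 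Your recurrence-matching argument is more self-contained (it uses only the defining three-term recurrence of $U_m$ and an elementary property of repunits, rather than quoting the classical evaluation) and it treats all $b>0$ uniformly; the paper's argument is shorter granted that classical identity, and it makes the underlying mechanism --- that $x_b$ is the Joukowski image of $\sqrt b$ --- explicit, which is the same structural fact your bookkeeping of powers of $b$ encodes implicitly.
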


\begin{proof}
If $b\ne 1$, set $z=\sqrt{b}$ so that $x_b=(z+z^{-1})/2$. The classical identity
\[
U_m\left(\frac{z+z^{-1}}{2}\right)=\frac{z^{m+1}-z^{-(m+1)}}{z-z^{-1}}
\qquad (z\ne \pm 1)
\]
gives
\[
U_m(x_b)=\frac{b^{(m+1)/2}-b^{-(m+1)/2}}{b^{1/2}-b^{-1/2}}
=b^{-m/2}\,\frac{b^{m+1}-1}{b-1}
=b^{-m/2}\,R_{m+1}(b).
\]
If $b=1$, then $x_b=1$ and the claim reduces to $U_m(1)=m+1=R_{m+1}(1)$.

\end{proof}

We now turn to the inverse. The starting point is the standard Chebyshev representation of the inverse of a symmetric tridiagonal Toeplitz matrix; Lemma~\ref{lem:cheb} eliminates all square roots and expresses the answer purely in terms of repunits.

\begin{theorem}[Inverse in terms of repunits]\label{thm:inverse}
Let $b>0$ and $n\geqslant 1$. Then $V_n(b)$ is invertible and for $1\leqslant i,j\leqslant n$ one has
\[
\left(V_n(b)^{-1}\right)_{i,j}
=
\frac{(-1)^{i+j}}{R_{n+1}(b)}
\begin{cases}
R_i(b)\,R_{n-j+1}(b), & i\leqslant j,\\
b^{\,i-j}\,R_j(b)\,R_{n-i+1}(b), & i\geqslant j.
\end{cases}
\]
\end{theorem}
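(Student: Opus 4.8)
The plan is to reduce to the symmetric Toeplitz case via Lemma~\ref{lem:similarity}, invoke the classical closed form for the inverse of a symmetric tridiagonal Toeplitz matrix in terms of Chebyshev polynomials $U_m$, then use Lemma~\ref{lem:cheb} to translate every Chebyshev value into a repunit and track the powers of $\sqrt{b}$ coming from the conjugating diagonal $D$. First I would note that $V_n(b)$ is invertible since $\det(V_n(b))=R_{n+1}(b)\ne 0$ by Lemma~\ref{lem:detrepunit} (it is a strictly positive geometric sum for $b>0$). Then, writing $V_n(b)=D\,T_n(b)\,D^{-1}$ with $D=\diag(b^{(i-1)/2})$, we get $V_n(b)^{-1}=D\,T_n(b)^{-1}\,D^{-1}$, so $\bigl(V_n(b)^{-1}\bigr)_{i,j}=b^{(i-j)/2}\,\bigl(T_n(b)^{-1}\bigr)_{i,j}$.

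Next I would recall the standard formula for $T_n(b)^{-1}$. Write $T_n(b)=\sqrt{b}\,(2x_b I - M)$ where $M$ is the tridiagonal matrix with zero diagonal and $1$'s on the off-diagonals and $x_b=(b+1)/(2\sqrt{b})$ as in Lemma~\ref{lem:cheb}; the eigenvectors are the sine vectors $u^{(k)}$ and the eigenvalues are $2\sqrt{b}(x_b - \cos(k\pi/(n+1))) = \lambda_k(b)$, consistent with Proposition~\ref{prop:eigs}. The classical closed form (e.g.\ via the continuant/Cramer computation, or the reference already used for the eigenvalues) states that for a symmetric tridiagonal Toeplitz matrix with diagonal $a$ and off-diagonal $c$, the inverse entries are, for $i\le j$,
\[
(-1)^{i+j}\,c^{\,j-i}\,\frac{U_{i-1}(a/(2c))\,U_{n-j}(a/(2c))}{U_n(a/(2c))},
\]
and symmetrically for $i\ge j$. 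With $a=b+1$, $c=\sqrt{b}$, so that $a/(2c)=x_b$, this gives for $i\le j$
\[
\bigl(T_n(b)^{-1}\bigr)_{i,j}=(-1)^{i+j}\,b^{(j-i)/2}\,\frac{U_{i-1}(x_b)\,U_{n-j}(x_b)}{U_n(x_b)}.
\]

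Then I would substitute Lemma~\ref{lem:cheb}: $U_{i-1}(x_b)=b^{-(i-1)/2}R_i(b)$, $U_{n-j}(x_b)=b^{-(n-j)/2}R_{n-j+1}(b)$, and $U_n(x_b)=b^{-n/2}R_{n+1}(b)$. Combining the $b$-powers from this substitution with the factor $b^{(j-i)/2}$ from the Toeplitz formula and the factor $b^{(i-j)/2}$ from the conjugation by $D$, the exponent of $b$ collapses to $0$ in the case $i\le j$, leaving exactly $(-1)^{i+j}R_i(b)R_{n-j+1}(b)/R_{n+1}(b)$. For $i\ge j$ one runs the symmetric version (swap the roles of $i,j$ in the Toeplitz formula, giving a factor $b^{(i-j)/2}$ there), and now the conjugation factor $b^{(i-j)/2}$ adds rather than cancels, producing the extra $b^{\,i-j}$ in the statement; the remaining bookkeeping of the $R$-factors is identical with $i,j$ swapped. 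Finally I would handle $b=1$ separately or simply note that both sides are rational functions of $b$ on $b>0$ that agree for $b\ne 1$, hence agree at $b=1$ by continuity, so no special case is actually needed.

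The main obstacle is pinning down the classical Toeplitz inverse formula with the correct normalization and sign — in particular matching the Chebyshev index shifts ($U_{i-1}$ versus $U_i$, $U_{n-j}$ versus $U_{n-j+1}$) and the power of the off-diagonal $c^{\,|i-j|}$ — since an off-by-one in any index would throw off every subsequent $b$-power cancellation. I would fix this either by citing the standard reference carefully or by giving a one-line self-contained derivation: solve $T_n(b)\,y=e_j$ by observing that the homogeneous solutions of the three-term recurrence $\sqrt{b}\,y_{m-1}+(b+1)y_m+\sqrt{b}\,y_{m+1}=0$ are $U_{m-1}(-x_b)$-type sequences, impose the boundary conditions $y_0=y_{n+1}=0$ on the two sides of index $j$, and read off the entries via the Wronskian-type identity $U_{i-1}U_{n-j}-U_{j-1}U_{n-i}$-style determinant; the $(-1)^{i+j}$ is exactly the sign accounting for $x_b$ versus $-x_b$. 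Once the Toeplitz formula is correctly in hand, the rest is the routine exponent arithmetic sketched above.
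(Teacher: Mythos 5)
Your overall route is the same as the paper's: reduce to $T_n(b)$ via Lemma~\ref{lem:similarity}, quote the classical Chebyshev closed form for the inverse of a symmetric tridiagonal Toeplitz matrix, convert Chebyshev values to repunits with Lemma~\ref{lem:cheb}, and conjugate back by $D$. (Your invertibility argument, $\det V_n(b)=R_{n+1}(b)>0$ instead of positivity of the eigenvalues, is a harmless variation.)

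The genuine problem is that the classical formula as you state it is wrong, and as a result the exponent bookkeeping you assert does not happen. For a symmetric tridiagonal Toeplitz matrix with diagonal $a$ and off-diagonal $c$, the correct statement for $i\le j$ is
\[
\left(T^{-1}\right)_{i,j}=(-1)^{i+j}\,\frac{U_{i-1}(x)\,U_{n-j}(x)}{c\,U_n(x)},\qquad x=\frac{a}{2c},
\]
not $(-1)^{i+j}c^{\,j-i}\,U_{i-1}(x)U_{n-j}(x)/U_n(x)$. In Usmani-type formulas the factor $c^{\,j-i}$ multiplies the actual corner minors $\theta_{i-1}=c^{\,i-1}U_{i-1}(x)$, $\phi_{j+1}=c^{\,n-j}U_{n-j}(x)$, with $\theta_n=c^{\,n}U_n(x)$ in the denominator; replacing these minors by the bare Chebyshev values, as you did, loses a net factor $c^{\,-(j-i+1)}$. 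A quick sanity check: for $n=1$ your formula gives $\left(T_1(b)^{-1}\right)_{11}=U_0(x_b)/U_1(x_b)=\sqrt{b}/(b+1)$ instead of $1/(b+1)$. Carrying your version through Lemma~\ref{lem:cheb} and the conjugation by $D$ leaves a surplus factor $b^{(j-i+1)/2}$, so the claimed collapse of the exponent to $0$ fails and the theorem does not follow. With the correct formula ($a=b+1$, $c=\sqrt{b}$, $x=x_b$) one gets, for $i\le j$,
\[
\left(T_n(b)^{-1}\right)_{i,j}=(-1)^{i+j}\,b^{(j-i)/2}\,\frac{R_i(b)\,R_{n-j+1}(b)}{R_{n+1}(b)},
\]
and then the factor $b^{(i-j)/2}$ coming from $D\,T_n(b)^{-1}D^{-1}$ cancels this power exactly when $i\le j$ and, using the symmetry of $T_n(b)^{-1}$, doubles it to $b^{\,i-j}$ when $i\ge j$, which is precisely the statement. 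You did flag the normalization and index shifts as the main risk and sketched a boundary-value derivation that would resolve it, but as written the quoted Toeplitz inverse and the asserted cancellation are incorrect, so the proof is not yet complete; fixing the formula as above makes the rest of your argument go through exactly as in the paper.
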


\begin{proof}
By Proposition~\ref{prop:eigs}, all eigenvalues of $V_n(b)$ are positive, hence $V_n(b)$ is invertible.
By Lemma~\ref{lem:similarity} we have $V_n(b)^{-1}=D\,T_n(b)^{-1}\,D^{-1}$.
A standard explicit formula for the entries of $T_n(b)^{-1}$ (in terms of Chebyshev polynomials) is given in
\cite{SmithPDE,YuehCheng}. Combining that formula with Lemma~\ref{lem:cheb} yields, for $i\leqslant j$,
\[
\left(T_n(b)^{-1}\right)_{i,j}
=
(-1)^{i+j}\,b^{-(j-i)/2}\,\frac{R_i(b)\,R_{n-j+1}(b)}{R_{n+1}(b)},
\]
and symmetry gives the case $i\geqslant j$.
Conjugating by $D$ then gives the stated formula for $V_n(b)^{-1}$.
\end{proof}

\end{document}